\theoremstyle{plain} \textwidth=36pc \textheight=51pc
\newtheorem{theorem}{Theorem}[section]
\newtheorem{lemma}[theorem]{Lemma}
\newtheorem{exa}[theorem]{Example}
\newtheorem{thm}[theorem]{Theorem}
\newtheorem{prop}[theorem]{Proposition}
\theoremstyle{definition}
\newtheorem{defn}[theorem]{Definition}
\newtheorem{remark}[theorem]{Remark}
\numberwithin{equation}{section}
\newcommand{\Z}{\mathbb{Z}}
\newcommand{\Hom}{\mathop{\mathrm{Hom}}\nolimits}
\newcommand{\HG}{\mathop{\mathrm{H}}\nolimits}
\newcommand{\field}{\mathop\mathrm{k}}
\newcommand{\DGM}{\mathop\mathrm{DGrMod}}
\newcommand{\DGL}{\mathop\mathrm{DGrLie}}
\newcommand{\DGP}{\mathop\mathrm{DGrP}}
\newcommand{\DGA}{\mathop\mathrm{DGA}}
\newcommand{\DGPA}{\mathop\mathrm{DGPA}}
\begin{document}
\title[DG Poisson algebra and its universal enveloping algebra]{DG Poisson algebra and its universal enveloping algebra}

\author{Jiafeng L\"u}
\address{L\"u: Department of Mathematics, Zhejiang Normal University, Jinhua, Zhejiang 321004, P.R. China}
\email{jiafenglv@zjnu.edu.cn, jiafenglv@gmail.com}

\author{Xingting Wang}
\address{Wang: Department of Mathematics, Temple University, Philadelphia 19122, USA }
\email{xingting@temple.edu}

\author{Guangbin Zhuang}
\address{Zhuang: Department of Mathematics,
University of Southern California, Los Angeles 90089-2532, USA}
\email{gzhuang@usc.edu}


\begin{abstract}
In this paper, we introduce the notions of differential graded (DG) Poisson algebra and DG Poisson module. Let $A$ be any DG Poisson algebra. We construct the universal enveloping algebra of $A$ explicitly, which is denoted by $A^{ue}$. We show that $A^{ue}$ has a natural DG algebra structure and it satisfies certain universal property. As a consequence of the universal property, it is proved that the category of DG Poisson modules over $A$ is isomorphic to the category of DG modules over $A^{ue}$. Furthermore, we prove that the notion of universal enveloping algebra $A^{ue}$ is well-behaved under opposite algebra and tensor product of DG Poisson algebras. Practical examples of DG Poisson algebras are given throughout the paper including those arising from differential geometry and homological algebra. 

\end{abstract}
\subjclass[2010]{16E45, 16S10, 17B35, 17B63}
\keywords{differential graded algebras, differential graded Hopf algebras, differential graded Lie algebras, differential graded Poisson algebras, universal enveloping algebras, monoidal category}
\maketitle

\section*{Introduction}
The Poisson bracket was originally introduced by French Mathematician Sim{\'e}on Denis Poisson in search for integrals of motion in Hamiltonian mechanics. For Poisson algebras, many important generalizations have been obtained in both commutative and noncommutative settings recently: Poisson orders \cite{BG}, graded Poisson algebras \cite{C, CFL}, noncommutative Leibniz-Poisson algebras \cite{CD}, left-right noncommutative Poisson algebras \cite{CDL}, Poisson PI algebras \cite{MPR}, double Poisson algebras \cite{Vd}, noncommutative Poisson algebras \cite{Xup}, Novikov-Poisson algebras \cite{X} and Quiver Poisson algebras \cite{YYZ}. One of the interesting aspects of Poisson algebras is the notion of Poisson universal enveloping algebra, which was first introduced by Oh in order to describe the category of Poisson modules \cite{Oh1}. Since then, many progresses have been made in various directions of studying Poisson universal enveloping algebras, such as universal derivations and automorphism groups \cite{U},  Poisson-Ore extensions \cite{LWZ1}, Poisson Hopf algebras \cite{LWZ2}, and deformation theory \cite{YYY}.

In this paper, our aim is to study Poisson algebras and their universal enveloping algebras in the differential graded setting. Roughly speaking, a DG Poisson algebra is a differential graded commutative algebra together with a Poisson bracket satisfying certain compatible conditions. The Poisson bracket in our definition is assumed to have arbitrary degree rather than zero, which seems to be more natural as it appears in various examples like Schouten-Nijenhuis bracket in differential geometry and Gerstenhaber-Lie bracket in homological algebra. There are many equivalent ways to define the universal enveloping algebra of a DG Poisson algebra, among which we choose to use the explicit generators and relations; see Definition \ref{Def:DGPA} and Remark \ref{Rmk}. Regarding the universal property, we succeed to generalize it to the DG setting in Proposition \ref{Prop:Universal}, which induces an equivalence of two module categories as Theorem \ref{THM:E}. Moreover, by considering the opposite algebra and the tensor product of DG Poisson algebras, we can view the universal enveloping algebra as a tensor functor from the category of DG Poisson algebras to the category of DG-algebras, which is summarized in Theorem \ref{THM:T}. Moreover, any commutative Poisson algebra gives rise to a Lie-Rinehart pair associated to its K{\"a}hler differential \cite[\S 5.2]{LWZ1}, which might provide an alternative approach to the results in our paper once we add the DG structure.  

Differential graded algebras arise naturally in algebra, representation theory and algebraic topology. For instance, Koszul complexes, endomorphism algebras, fibers of ring homomorphisms, singular chain and cochain algebras of topological spaces, and bar resolutions all admit natural differential graded algebra structures. Moreover, differential graded commutative algebra provides powerful techniques for proving theorems about modules over commutative rings \cite{BS}. On the other side, using deformation quantization theory \cite{EK, H, Kon}, Poisson brackets can be used as a tool to study noncommutative algebras. For example, by considering the Poisson structure on the center of Weyl algebras, Belov and Kontsevich proved that the Jacobian Conjecture is stably equivalent to the Dixmier Conjecture \cite{BK}. Therefore, as a generalization of both structures, we expect more study of DG Poisson algebras for its own interests, and further applications in a wide range of mathematical fields.

The paper is organized as follows. Some background materials are reviewed in Section \ref{Section:BG}. The definition of DG Poisson algebra is given in Section \ref{Section:Def}, and the universal enveloping algebra of a DG Poisson algebra is defined in Section \ref{Section:Universal}. Basic properties of universal enveloping algebras are discussed in Section \ref{Section:Prop}. Several further examples of DG Poisson algebras are provided in Section \ref{Section:Example}. In last Section \ref{Section:Dis}, future projects related to this paper are discussed. 

\subsection*{Acknowledgments} The authors first want to give their sincere gratitudes to James Zhang for introducing them this project. They also want to thank Yanhong Bao, Jiwei He, Xuefeng Mao, Cris Negron and James Zhang for many valuable discussions and suggestions on this paper. Additional thanks are given to the anonymous referees for several suggestions that improved the exposition of the paper. The first author is partially supported by National Natural Science Foundation of China [11001245, 11271335 and 11101288].

\medskip
\section{Preliminaries}\label{Section:BG}
Throughout we work over a base field $\field$, all vector spaces and linear maps are over $\field$. In what follows, an unadorned $\otimes$ means $\otimes_{\field}$ and $\Hom$ means $\Hom_{\field}$. All gradings are referred to $\Z$-gradings with index occurs in upper superscript regarding cohomology. All elements and homomorphisms will be homogeneous in the graded setting. We refer the reader to \cite{Rational, Quillen} for the following well-known definitions. 

A graded (associative) algebra means a $\Z$-graded algebra $A=\bigoplus_{i\in \Z}A^i$ with $\field\subseteq A^0$ and $A^iA^j\subseteq A^{i+j}$ for all $i,j\in \Z$. Moreover, $A$ is said to be graded commutative if $ab=(-1)^{|a||b|}ba$ for all elements $a,b\in A$, where $|a|,|b|$ denote the degree of $a,b$ in $A$. 

A differential graded algebra (or DG-algebra, for short) is a graded algebra $A$ together with a map $d_A: A\to A$ of degree 1 such that \begin{enumerate} \item (differential) $d_A^2=0$; and \item (graded Leibniz rule) $d_A(ab)=d_A(a)b+(-1)^{|a|}ad_A(b)$ for all elements $a,b\in A$. \end{enumerate} Note that the cohomology of a DG-algebra is always a graded algebra.  

Let $A$ be a DG-algebra. A left differential graded (DG) module over $A$ is a graded left $A$-module $M=\bigoplus_{i\in \Z}M^i$ together with a linear map $d_M: M\to M$ of degree 1 such that \begin{enumerate} \item $d_M^2=0$; and \item $d_M(a\cdot m)=d_A(a)\cdot m+(-1)^{|a|}a\cdot d_M(m)$ for all elements $a\in A$ and $m\in M$, where $\cdot$ denotes the $A$-module action on $M$. \end{enumerate} Right DG $A$-modules and DG $A$-bimodules can be defined in the similar manner. The category of left DG $A$-modules (resp. right DG $A$-modules) will be denoted by $\DGM$-$A$ (resp. $A$-$\DGM$). For any $M,N\in \DGM$-$A$, let $\Hom_A(M,N)$ be the set of all $A$-module morphisms $f: M\to N$ of degree 0 such that $fd_M=d_Nf$. 

A differential graded (DG) Lie algebra is a graded vector space $L=\bigoplus_{i\in \Z} L^i$ together with a bilinear Lie bracket $[-,-]: L\otimes L\to L$ of degree $p$ and a differential $d: L\to L$ of degree 1 ($d^2=0$) such that \begin{enumerate} \item (antisymmetry) $[a,b]=-(-1)^{(|a|+p)(|b|+p)}[b,a]$; \item (Jacobi identity) $[a,[b,c]]=[[a,b],c]+(-1)^{(|a|+p)(|b|+p)}[b,[a,c]]$; and \item $d[a,b]=[d(a),b]+(-1)^{(|a|+p)}[a,d(b)]$ for all elements $a,b,c\in L$. \end{enumerate} It is worth pointing out that in above definition, we assume that the Lie bracket has arbitrary degree $p$ rather than $p=0$ as usual. It applies to many cases, for example, the Gerstenhaber Lie bracket defined for the Hochschild cohomology for any associative algebra has degree $-1$ ($p=-1$ and $d=0$). Note that Jacobi identity of the Lie bracket may also be expressed in a more symmetrical form: for any $a,b,c\in L$, we have
$$
 (-1)^{(|a|+p)(|c|+p)}[a,[b,c]]+ (-1)^{(|b|+p)(|a|+p)}[b,[c,a]]+(-1)^{(|c|+p)(|b|+p)}[c,[a,b]]=0.
 $$

Let $L$ be a DG Lie algebra with Lie bracket of degree $p$. A left differential graded (DG) Lie module over $L$ is a graded vector space $M=\bigoplus_{i\in \Z}M^i$ together with a differential $d_M: M\to M$ of degree 1 ($d_M^2=0$) and a bilinear map $[-,-]_M: L\otimes M \to M$ of degree $p$ such that \begin{enumerate} \item $[[a,b]_L,m]_M=[a,[b,m]_M]_M+(-1)^{(|a|+p)(|b|+p)}[b,[a,m]_M]_M$; and \item $d_M([a, m]_M)=[d_L(a), m]_M+(-1)^{(|a|+p)}[a,d_M(m)]_M$ for all elements $a\in L$ and $m\in M$. \end{enumerate} Right DG Lie $L$-modules and DG Lie $L$-bimodules can be defined in the similar manner. The category of left DG Lie $L$-modules (resp. right DG Lie $L$-modules) will be denoted by $\DGL$-$L$ (resp. $L$-$\DGL$). For any $M,N\in \DGL$-$A$, let $\Hom_L(M,N)$ be the set of all Lie $L$-module morphisms $f: M\to N$ of degree 0 such that $fd_M=d_Nf$.

\section{DG Poisson algebras and modules}\label{Section:Def}
In literature, a Poisson algebra usually means a commutative algebra together with a Lie bracket which is a biderivation \cite[Definition 2.1]{LWZ1}. The notion of DG Poisson algebra can be considered as a combination of commutative algebras and Lie algebras in the differential graded setting. In the following definition, we allow the Poisson bracket to have arbitrary degree $p$.

\begin{defn}\label{D:DGPA}
A differential graded Poisson algebra (or DG Poisson algebra, for short) is a quadruple $(A,\cdot,\{-.-\}, d)$, where $A=\bigoplus_{i\in \Z} A^i$ is a graded vector space such that  
\begin{itemize}
\item[(i)] $(A,\cdot,d)$ is a graded commutative DG-algebra with differential $d: A\to A$ of degree 1.
\item[(ii)] $(A,\{-,-\},d)$ is a DG Lie algebra with Poisson bracket $\{-,-\}: A\otimes A\to A$ of degree $p$.
\item[(iii)] (Poisson identity): $\{a,b\cdot c\}=\{a,b\}\cdot c+(-1)^{(|a|+p)|b|}b\cdot \{a,c\}$ for all elements $a,b,c\in A$.
\end{itemize}
\end{defn}

\begin{remark}
A noncommutative version of a DG Poisson algebra can be made without ``graded commutative'' in Definition \ref{D:DGPA} (i).
\end{remark}

In the following, we will consider Poisson modules over a DG Poisson algebra $A$ with Poisson bracket of degree $p$. In order to make compatible with the degree of the Poisson bracket of $A$, we additionally require that the bracket $A\otimes M\to M$ in the Poisson module structure for any Poisson module $M$ shares the same degree $p$ with the degree of the Poisson bracket of $A$.  

\begin{defn}\label{Def:DGP}
A left differential graded (DG) Poisson module over $A$ is a quadruple $(M,*,\{-,-\}_M,d_M)$, where $M=\bigoplus_{i\in \Z} M^i$ is a graded vector space with differential $d_M:M\to M$ of degree 1 such that
\begin{itemize}
\item[(i)] $(M,*,d_M)\in \DGM-A$ via $A$-module action $*: A\otimes M\to M$ of degree 0.  
\item[(ii)] $(M,\{-.-\}_M,d_M)\in \DGL-A$ via Lie $A$-module action $\{-.-\}_M: A\otimes M\to M$ of degree $p$.
\item[(iii)] $\{a,b*m\}_M=\{a,b\}_A*m+(-1)^{(|a|+p)|b|}b*\{a,m\}_M$, and 
\item[(iv)] $\{a\cdot b,m\}_M=a*\{b,m\}_M+(-1)^{|a||b|}b*\{a,m\}_M$ for all elements $a,b\in A$ and $m\in M$.
\end{itemize}
Right DG Poisson $A$-modules and DG Poisson $A$-bimodules are defined in the similar manner. The category of left DG Poisson $A$-modules (resp. right DG Poisson $A$-modules) will be denoted by $\DGP$-$A$ (resp. $A$-$\DGP$). For any $M,N\in \DGP$-$A$, let $\Hom_A(M,N)$ be the set of all morphisms $f: M\to N$ of degree 0, which are both $A$-module and Lie $A$-module morphisms satisfying $fd_M=d_Nf$. 
\end{defn}

\begin{exa}\label{eg1}
{\rm The following are some trivial examples of DG Poisson algebras.
\begin{enumerate}
  \item[(i)] Let $(A,\cdot, \{-,-\},d)$ be a DG Poisson algebra with Poisson bracket of degree $0$. Then the degree 0 piece $(A^0,\cdot, \{-,-\})$ is an ordinary Poisson algebra. Conversely, any Poisson algebra can be considered as a DG Poisson algebra concentrated in degree 0 with trivial differential.
  \item[(ii)] Graded commutative DG-algebras are DG Poisson algebras with trivial Poisson bracket.
  \item[(iii)] Graded Poisson algebras are DG Poisson algebras with trivial differential.
  \item[(iv)] DG Lie algebras are DG Poisson algebras with trivial product added by a copy of the base field $\field$.
\end{enumerate}}
\end{exa}

\begin{exa}\label{EX:DGV}
{\rm 
Let $(V, d_V)$ be a differential graded vector space with differential $d_V: V\to V$ of degree 1. There are two natural ways to construct DG Poisson algebras from $V$. One is to take the $\Hom$ complex of $V$, denoted by $\Hom(V, V)$. It is easy to check that $\Hom(V, V)$ is a noncommutative DG-algebra with the product given by the composition, and the differential given by $d(f)=d_Vf-(-1)^{|f|}fd_V$ for any $f\in \Hom(V,V)$. Moreover, one sees that $\Hom(V,V)$ becomes a DG Poisson algebra, where the Poisson bracket is given by the graded commutator defined by $[f,g]:=fg-(-1)^{|f||g|}gf$ for any $f,g\in \Hom(V,V)$.

The other way is to consider the graded symmetric algebra $S(V)$ over $V$ such that  
\[
S(V):=T(V)/(f\otimes g-(-1)^{|f||g|}g\otimes f),
\]
where $T(V)$ is the tensor algebra over $V$, and $f,g\in V$. Therefore, $S(V)$ becomes a DG Poisson algebra via (1) (differential) $d(f)=d_Vf-(-1)^{|f|} fd_V$, and (2) (Poisson bracket) $\{f,g\}:=[f,g]$ for any $f,g\in V$.
}
\end{exa}
\section{Universal enveloping algebras of DG Poisson algebras}\label{Section:Universal}
The universal enveloping algebra of an ordinary Poisson algebra is given in \cite{Oh1}. Our aim is to generalize the definition to the differential graded setting. Let $A$ be a DG Poisson algebra with Poisson bracket $\{-,-\}$ of degree $p$.   
\begin{defn}\label{Def:DGPA}
Let $M_A =\{M_a: a\in A\}$ and $H_A =\{H_a: a\in A\}$ be two copies of the graded vector space $A$ endowed with two linear isomorphisms $M: A\to M_A$ sending $a$ to $M_a$ and $H: A\to H_A$ sending $a$ to $H_a$. The universal enveloping algebra $A^{ue}$ of $A$ is defined to be the quotient algebra of the free algebra generated by $M_A$ and $H_A$, subject to the following relations:
\begin{itemize}
\item[(i)] $M_{ab}=M_aM_b$,
\item[(ii)] $H_{\{a,b\}}=H_aH_b-(-1)^{(|a|+p)(|b|+p)}H_bH_a$,
\item[(iii)] $H_{ab}=M_aH_b+(-)^{|a||b|}M_bH_a$,
\item[(iv)] $M_{\{a,b\}}=H_aM_b-(-1)^{(|a|+p)|b|}M_bH_a$,
\item[(v)] $M_1=1$
\end{itemize}
for all elements $a,b\in A$. 
\end{defn}
\begin{lemma}
The universal enveloping algebra $A^{ue}$ has a natural DG-algebra structure induced by $A$ such that $|M_a|=|a|$, $|H_a|=|a|+p$ and $d(M_a)=M_{d(a)}$, $d(H_a)=H_{d(a)}$ for any $a\in A$. 
\end{lemma}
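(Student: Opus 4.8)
The plan is to verify that the assignments $d(M_a)=M_{d(a)}$ and $d(H_a)=H_{d(a)}$, extended to all of $A^{ue}$ by the graded Leibniz rule, define a well-defined differential of degree $1$. There are three things to check: first, that $d$ respects the defining relations (i)--(v) of Definition \ref{Def:DGPA}, so that it descends from the free algebra on $M_A\sqcup H_A$ to the quotient $A^{ue}$; second, that $d^2=0$; and third, that the degree conventions $|M_a|=|a|$, $|H_a|=|a|+p$ are consistent with the relations, so that $A^{ue}$ is genuinely graded and $d$ has degree $1$. The degree check is essentially bookkeeping: in each relation one compares the degrees of both sides using $|a|,|b|$ and $p$, e.g. in (iii) the left side $H_{ab}$ has degree $|a|+|b|+p$ while $M_aH_b$ has degree $|a|+(|b|+p)$, and similarly for the others; relation (ii) uses that the bracket has degree $p$ so $|\{a,b\}|=|a|+|b|+p$ and hence $|H_{\{a,b\}}|=|a|+|b|+2p=|H_a|+|H_b|$.

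First I would define $\widetilde d$ on the free algebra $F$ generated by $M_A$ and $H_A$ by setting $\widetilde d(M_a)=M_{d(a)}$, $\widetilde d(H_a)=H_{d(a)}$ on generators (both well-defined since $M,H$ are linear isomorphisms and $d$ is linear on $A$), and extending to all of $F$ as the unique degree $1$ derivation with these values. Then $\widetilde d^2$ is a degree $2$ derivation of $F$ vanishing on generators because $\widetilde d^2(M_a)=M_{d^2(a)}=M_0=0$ and likewise for $H_a$, using $d^2=0$ in $A$; hence $\widetilde d^2=0$ on all of $F$. It remains to check that $\widetilde d$ sends the two-sided ideal $I$ of relations into itself, so that it induces a map $d$ on $A^{ue}=F/I$ with $d^2=0$ automatically inherited.

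The core of the argument is applying $\widetilde d$ to each relator and recognizing the result as a consequence of the relations, using the compatibility axioms of the DG Poisson algebra $A$. For (i): $\widetilde d(M_{ab}-M_aM_b)=M_{d(ab)}-M_{d(a)}M_b-(-1)^{|a|}M_aM_{d(b)}$, and since $d(ab)=d(a)b+(-1)^{|a|}ad(b)$ in $A$, this equals $(M_{d(a)b}-M_{d(a)}M_b)+(-1)^{|a|}(M_{ad(b)}-M_aM_{d(b)})\in I$. For (ii) one uses the DG Lie axiom $d\{a,b\}=\{d(a),b\}+(-1)^{|a|+p}\{a,d(b)\}$ from Definition \ref{D:DGPA}(ii) together with relation (ii) applied to the pairs $(d(a),b)$ and $(a,d(b))$; here one must track the Koszul signs carefully, since $|d(a)|=|a|+1$ shifts the sign exponents $(|a|+p)(|b|+p)$. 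Relations (iii) and (iv) use Definition \ref{D:DGPA}(iii) (the Poisson identity) and the DG Lie differential axiom respectively, each combined with the corresponding relation applied to differentiated arguments; (v) is immediate since $d(M_1)=M_{d(1)}=M_0=0$ because $d(1)=0$ in any DG-algebra.

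The main obstacle I anticipate is purely a matter of sign discipline: in relations (ii) and (iv), the Hamiltonian generators carry the shifted degree $|a|+p$, so differentiating one argument changes a sign exponent like $(-1)^{(|a|+p)(|b|+p)}$ to $(-1)^{(|a|+p+1)(|b|+p)}$, and one must confirm that the extra factor $(-1)^{|b|+p}$ matches exactly the sign produced by the graded Leibniz rule for $\widetilde d$ acting on the corresponding monomial $H_aH_b$ (where $\widetilde d$ acting past $H_a$ produces $(-1)^{|H_a|}=(-1)^{|a|+p}$). Verifying that these two sources of signs agree in every term of every relator is the only genuinely delicate step; once it is done, closure of $I$ under $\widetilde d$ follows, and the lemma is proved.
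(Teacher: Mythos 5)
Your proposal is correct and follows essentially the same route as the paper: the relations are homogeneous (giving the grading), $d$ is defined as a degree-$1$ derivation on generators by $d(M_a)=M_{d(a)}$, $d(H_a)=H_{d(a)}$, and one verifies it preserves each defining relation using the Leibniz rule on $A$ and the DG Lie axiom for the bracket. Your additional observations --- that $\widetilde d^{\,2}$ is a derivation vanishing on generators, hence zero, and the explicit sign bookkeeping for relation (ii) --- are consistent with the computation the paper carries out for that relation.
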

\begin{proof}
It is clear that all the relations (i)-(v) in Definition \ref{Def:DGPA} are homogeneous. Then, $A^{ue}$ is a graded algebra. It remains to show that $d$ preserves all the relations, which is routine to check, e.g., for (ii) we have
\begin{align*}
&d(H_{\{a,b\}})\\
=&\ H_{d\{a,b\}}\\
=&\ H_{\{d(a),b\}}+(-1)^{(|a|+p)}H_{\{a,d(b)\}}\\
=&\ H_{d(a)}H_b-(-1)^{(|d(a)|+p)(|b|+p)}H_bH_{d(a)}+(-1)^{(|a|+p)}(H_aH_{d(b)}-(-1)^{(|a|+p)(|d(b)|+p)})H_{d(b)}H_a)\\
=&\ (H_{d(a)}H_b+(-1)^{(|a|+p)}H_aH_{d(b)})-(-1)^{(|a|+p)(|b|+p)}(H_{d(b)}H_a+(-1)^{(|b|+p)}H_bH_{d(a)})\\
=&\ (d(H_a)H_b+(-1)^{|H_a|}H_ad(H_b))-(-1)^{(|a|+p)(|b|+p)}(d(H_b)H_a+(-1)^{|H_b|}H_bd(H_a))\\
=&\ d(H_aH_b-(-1)^{(|a|+p)(|b|+p)}H_bH_a).
\end{align*}
This completes the proof. 
\end{proof}

The universal enveloping algebra of an ordinary Poisson algebra can be described by certain universal property; see \cite[\S1.2]{LWZ2}. The same thing happens to any DG Poisson algebra $A$, which is determined by the similar universal property. We say a triple $(B,f,g)$ has property $\mathscr P$ with respect to $A$ (or $\mathscr P$-triple, for short) if
\begin{itemize}
\item[(P1)] $B$ is a DG-algebra and $f:A\to B$ is a DG-algebra map of degree 0;
\item[(P2)] $g:(A,\{-,-\})\to B$ is a DG Lie algebra map of degree $p$, where the Lie bracket on $B$ is given by the graded commutator;
\item[(P3)] $f({\{a,b\}})=g(a)f(b)-(-1)^{(|a|+p|)b|}f(b)g(a)$, and
\item[(P4)] $g(ab)=f(a)g(b)+(-1)^{|a||b|}f(b)g(a)$, for all $a,b\in A$.
\end{itemize}  

Note that in Definition \ref{Def:DGPA}, $M_A, H_A$ induces two linear maps from $A$ to $A^{ue}$, where we abuse to keep the same notations as $M, H: A\to A^{ue}$. It is clear that the triple $(A^{ue},M,H)$ has property $\mathscr P$, which is universal in the following sense.
\begin{prop}\label{Prop:Universal}
For any $\mathscr P$-triple $(B,f,g)$, there exists a unique DG-algebra map $\phi: A^{ue}\to B$ of degree 0 such that the following diagram
\[
\xymatrix{
A\ar[rr]^-{M}_-{H}\ar[dr]^-{f}_-{g} && A^{ue}\ar@{-->}[dl]^-{\exists ! \phi}\\
& B&
}
\]
bi-commutes, i.e., $f=\phi M$ and $g=\phi H$.
\end{prop}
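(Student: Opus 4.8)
The plan is to construct $\phi$ on generators and then verify it descends to the quotient. Since $A^{ue}$ is defined as a quotient of the free algebra $F$ generated by the sets $M_A = \{M_a : a\in A\}$ and $H_A = \{H_a: a\in A\}$, the universal property of the free algebra gives a unique algebra homomorphism $\widetilde\phi: F\to B$ determined by $\widetilde\phi(M_a) = f(a)$ and $\widetilde\phi(H_a) = g(a)$ for all $a\in A$; here we use that $M,H$ (and hence the generating sets) are indexed linearly by $A$, and that $f,g$ are linear, so this assignment is compatible with the linear structure. To see that $\widetilde\phi$ factors through $A^{ue}$, I would check that $\widetilde\phi$ kills each of the defining relations (i)--(v) of Definition \ref{Def:DGPA}. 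Relation (i) follows because $f$ is an algebra map: $\widetilde\phi(M_{ab}) = f(ab) = f(a)f(b) = \widetilde\phi(M_aM_b)$. Relation (v) follows because $f$ is a DG-algebra map of degree $0$, hence unital, so $f(1)=1$. Relation (ii) is exactly the statement that $g$ is a Lie algebra map where the target bracket is the graded commutator, i.e. property (P2). Relations (iii) and (iv) are precisely (P4) and (P3) respectively. Thus $\widetilde\phi$ descends to an algebra map $\phi: A^{ue}\to B$ with $\phi M = f$ and $\phi H = g$, which is the asserted bi-commutativity.

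Next I would check that $\phi$ is a morphism in the intended (DG, degree $0$) sense. Degree $0$: by the previous Lemma, $|M_a| = |a|$ and $|H_a| = |a|+p$ in $A^{ue}$; since $f$ has degree $0$ we get $|f(a)| = |a| = |M_a|$, and since $g$ has degree $p$ (property (P2)) we get $|g(a)| = |a|+p = |H_a|$, so $\phi$ preserves degree. Compatibility with differentials: it suffices to check $\phi\, d_{A^{ue}} = d_B\, \phi$ on the generators, since both sides are $\field$-linear and the left side is a derivation-like composite while $d_B$ and $d_{A^{ue}}$ satisfy the graded Leibniz rule and $\phi$ is multiplicative (so $\phi d$ and $d\phi$ agree on a product once they agree on its factors). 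On generators, $\phi(d_{A^{ue}}(M_a)) = \phi(M_{d(a)}) = f(d(a)) = d_B(f(a)) = d_B(\phi(M_a))$, using that $f$ is a DG-algebra map; similarly $\phi(d_{A^{ue}}(H_a)) = \phi(H_{d(a)}) = g(d(a)) = d_B(g(a)) = d_B(\phi(H_a))$, using that $g$ is a map of DG Lie algebras (so commutes with differentials). Hence $\phi$ is a DG-algebra map of degree $0$.

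Finally, uniqueness: any DG-algebra map $\psi: A^{ue}\to B$ with $\psi M = f$ and $\psi H = g$ must agree with $\phi$ on every $M_a$ and $H_a$; since these generate $A^{ue}$ as an algebra and $\psi,\phi$ are both algebra maps, $\psi = \phi$.

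I do not expect any serious obstacle here: the statement is essentially a repackaging of the defining relations of $A^{ue}$ as the conditions (P1)--(P4), so the ``proof'' is the observation that the two lists of conditions match up. The only point requiring a little care is the verification that the assignment on the free algebra is well-defined with respect to the \emph{linear} indexing by $A$ (so that, e.g., $M_{a+b}$ and $M_a+M_b$ have the same image), and the routine reduction of the differential-compatibility check to generators via the Leibniz rule; both are straightforward.
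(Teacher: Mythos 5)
Your proposal is correct and follows essentially the same route as the paper's proof: define $\phi$ on the generators by $\phi(M_a)=f(a)$, $\phi(H_a)=g(a)$, verify that relations (i)--(v) of Definition \ref{Def:DGPA} are killed using exactly (P1)--(P4), check compatibility with the differentials on generators, and conclude uniqueness from the fact that the $M_a$, $H_a$ generate $A^{ue}$. Your write-up is somewhat more explicit than the paper's (in particular about the factoring through the free algebra and the linearity of the indexing), but the argument is the same.
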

\begin{proof}
Suppose the map $\phi$ exists. We get $\phi(M_a)=f(a)$ and $\phi(H_a)=g(a)$ for all $a\in A$ by using bi-commutativity $f=\phi M$ and $g=\phi H$. In order to show that $\phi$ is well-defined on $A^{ue}$, we need to show that $\phi$ preserves all the relations (i)-(v) in Definition \ref{Def:DGPA}. (P1) implies that $\phi(M_{ab})=f(ab)=f(a)f(b)=\phi(M_a)\phi(M_b)$ for any $a,b\in A$ and $\phi(M_1)=f(1)=1=\phi(1)$. So $\phi$ preserves (i) and (v). Similarly, $\phi$ preserves (ii) because of (P2); (iii) is preserved by $\phi$ by (P4); and (P3) implies that (iv) is preserved by $\phi$. Moreover, it is clear that $\phi$ commutes with the differentials of $A^{ue}$ and $B$ for $f$ and $g$ are DG maps. Finally, the uniqueness of $\phi$ is obvious. This completes the proof. 
\end{proof}
 
\begin{remark}\label{Rmk}
The other way to define the universal enveloping algebra of an ordinary Poisson algebra is though smash product \cite[Definition 4.1.3]{Mo} modulo certain relations; see \cite[\S 2]{YYZ}. We can carry the same idea for any DG Poisson algebra $A$. Consider $(A,\{-,-\})$ as a DG Lie algebra, and denote by $\mathcal U(A)$ its universal enveloping algebra. It is important to point out that $\mathcal U(A)$ is a differential graded Hopf algebra with respect to the total grading coming from the grading of $A$, where elements in $A\subset \mathcal U(A)$ are all primitive. It is routine to check that $A$ becomes a differential graded $\mathcal U(A)$-module algebra via $h\cdot a:=\{h,a\}$ for all elements $h,a\in A$. As a consequence, the universal enveloping algebra $A^{ue}$ is the quotient algebra of the smash product $A\#\mathcal U(A)$, subject to the relations $1\#(ab)-a\#b-(-1)^{|a||b|}b\#a$ for all elements $a,b\in A$. 
\end{remark}

\section{Some basic properties}\label{Section:Prop}
Recall in Example \ref{EX:DGV}, let $V$ be a graded vector space with differential $d_V$ of degree 1. Denote by $\Hom(V,V)$ all the homogenous maps from $V$ to itself. which has a natural differential given by $d(f)=d_V f-(-1)^{|f|}f d_V$ for any $f\in \Hom(V,V)$. Moreover, one sees that $\Hom(V,V)$ becomes a DG-algebra via composition of maps such that $d(fg)=d(f)g+(-1)^{|f|}fd(g)$ for any $f,g\in \Hom(V,V)$. 

Now, let $A$ be a DG Poisson algebra with Poisson bracket $\{-,-\}_A$ of degree $p$. By Definition \ref{Def:DGP}, any DG Poisson $A$-module structure on $V$ requires two linear maps from $A$ to $\Hom(V,V)$, i.e., one $A$-module action $*: A\otimes V\to V$ of degree 0 and another Lie $A$-module action $\{-.-\}_V: A\otimes V\to V$ of degree $p$ satisfying conditions described in Definition \ref{Def:DGP} (i)-(iv). The following lemma is straightforward.

\begin{lemma}\label{Lemma:P}
The DG vector space $(V,d_V)$ belongs to $\DGP$-$A$ if and only if there exits a $\mathscr P$-triple $(\Hom(V,V), f, g)$ such that the $A$-module and Lie $A$-module actions on $V$ are given by $f$ and $g$ correspondingly.
\end{lemma}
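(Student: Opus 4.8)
The statement is an ``if and only if'' that essentially unwinds definitions, so the plan is to set up a dictionary between the two linear maps $*\colon A\otimes V\to V$ and $\{-,-\}_V\colon A\otimes V\to V$ appearing in Definition \ref{Def:DGP} and the two maps $f,g\colon A\to\Hom(V,V)$ appearing in the notion of a $\mathscr P$-triple, and then check that each axiom on one side translates exactly into an axiom on the other. Concretely, given a DG Poisson $A$-module structure on $V$, I would define $f(a)\in\Hom(V,V)$ by $f(a)(v)=a*v$ and $g(a)\in\Hom(V,V)$ by $g(a)(v)=\{a,v\}_V$; note $f(a)$ has degree $|a|$ and $g(a)$ has degree $|a|+p$, matching the fact that $f$ must be a DG-algebra map of degree $0$ and $g$ a DG Lie algebra map of degree $p$ into $\Hom(V,V)$. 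Conversely, given a $\mathscr P$-triple $(\Hom(V,V),f,g)$, define the actions by the same formulas read backwards. The bulk of the argument is then a line-by-line correspondence.

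\textbf{Key steps, in order.} First I would verify that Definition \ref{Def:DGP}(i), saying $(V,*,d_V)\in\DGM$-$A$, is equivalent to (P1), i.e.\ that $f$ is a degree-$0$ DG-algebra map: the module associativity $a*(b*v)=(ab)*v$ and unitality $1*v=v$ are exactly multiplicativity $f(a)f(b)=f(ab)$ and $f(1)=\ID$ in $\Hom(V,V)$, while the Leibniz-type compatibility $d_V(a*v)=d_A(a)*v+(-1)^{|a|}a*d_V(v)$ is exactly the statement that $f$ intertwines $d_A$ with the differential $d(f)=d_Vf-(-1)^{|f|}fd_V$ of $\Hom(V,V)$ (this last point is the one small computation worth spelling out, since it is where the specific differential on $\Hom(V,V)$ from Example \ref{EX:DGV} gets used). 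Second, I would check that Definition \ref{Def:DGP}(ii), saying $(V,\{-,-\}_V,d_V)\in\DGL$-$A$, is equivalent to (P2), i.e.\ that $g$ is a degree-$p$ DG Lie algebra map into $\Hom(V,V)$ equipped with the graded commutator: the Lie-module axiom $[[a,b]_A,v]_V=[a,[b,v]_V]_V\pm[b,[a,v]_V]_V$ becomes $g(\{a,b\}_A)=g(a)g(b)-(-1)^{(|a|+p)(|b|+p)}g(b)g(a)$, and the differential compatibility of the Lie action corresponds to $g$ being a DG map with respect to the same differential on $\Hom(V,V)$. Third, I would observe that Definition \ref{Def:DGP}(iii) is literally (P3) and Definition \ref{Def:DGP}(iv) is literally (P4), once both sides are evaluated on an arbitrary $v\in V$ and one matches signs (the exponent $(|a|+p)|b|$ in (iii) and $|a||b|$ in (iv) agree verbatim). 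Finally I would note that the converse direction needs nothing new: the same four identifications, read in reverse, recover all of (i)--(iv) of Definition \ref{Def:DGP}, so the correspondence is a genuine bijection between DG Poisson $A$-module structures on $(V,d_V)$ and $\mathscr P$-triples of the form $(\Hom(V,V),f,g)$.

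\textbf{Main obstacle.} There is no conceptual obstacle here; the only thing requiring care is bookkeeping of Koszul signs, and in particular confirming that the sign conventions built into the differential $d(f)=d_Vf-(-1)^{|f|}fd_V$ on $\Hom(V,V)$ (and into the graded commutator there) produce exactly the signs demanded by Definition \ref{Def:DGP}(i)--(iv) and by the DG Lie/DG algebra axioms of Section \ref{Section:BG}. I would therefore present the proof as the four equivalences above, writing out in full detail only the verification that ``$*$ is a DG-module action'' $\Longleftrightarrow$ ``$f$ is a DG-algebra map'' (which pins down the role of the $\Hom$-complex differential), and then asserting that items (ii)--(iv) follow by the same mechanism, since each is a pointwise reformulation of one of (P2)--(P4) with identical sign exponents.
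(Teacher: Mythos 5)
Your proposal is correct and is exactly the argument the paper has in mind: the paper offers no written proof (it declares the lemma ``straightforward'' immediately after observing that a DG Poisson module structure on $V$ amounts to two linear maps $A\to\Hom(V,V)$), and your dictionary $f(a)(v)=a*v$, $g(a)(v)=\{a,v\}_V$ together with the axiom-by-axiom translation of Definition \ref{Def:DGP}(i)--(iv) into (P1)--(P4) is precisely that intended unwinding. Your attention to the $\Hom$-complex differential $d(f)=d_Vf-(-1)^{|f|}fd_V$ is the right place to concentrate the sign-checking, so nothing is missing.
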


We will see that the notion of universal enveloping algebra for DG Poisson algebra is proper in its usual sense as there exists an equivalence of two module categories.

\begin{thm}\label{THM:E}
The category of left (resp. right) DG Poisson modules over $A$ is equivalent to the category of left (resp. right) DG modules over $A^{ue}$, i.e.,
$$
\DGP-A\equiv \DGM-A^{ue}\ (\mbox{resp.}\ A-\DGP\equiv A^{ue}-\DGM).
$$
\end{thm}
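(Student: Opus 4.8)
The plan is to establish the equivalence $\DGP\text{-}A \equiv \DGM\text{-}A^{ue}$ by constructing functors in both directions and checking they are mutually inverse, with all the real content already packaged in Lemma \ref{Lemma:P} and the universal property of Proposition \ref{Prop:Universal}. First I would define a functor $F: \DGP\text{-}A \to \DGM\text{-}A^{ue}$. Given $(V,*,\{-,-\}_V,d_V) \in \DGP\text{-}A$, Lemma \ref{Lemma:P} produces a $\mathscr{P}$-triple $(\Hom(V,V),f,g)$ where $f$ encodes the $A$-action and $g$ the Lie $A$-action. Proposition \ref{Prop:Universal} then yields a unique DG-algebra map $\phi: A^{ue} \to \Hom(V,V)$ of degree $0$ with $\phi M = f$ and $\phi H = g$. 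A DG-algebra map into $\Hom(V,V)$ is precisely a DG $A^{ue}$-module structure on $V$ (one must observe that the differential $d$ on $\Hom(V,V)$ is exactly the one making a degree-$0$ DG-algebra map $\psi$ correspond to a module action compatible with $d_V$ in the sense of the DG-module axioms). So set $F(V) = (V, \phi, d_V)$. On morphisms: a map $h: V \to W$ in $\DGP\text{-}A$ commutes with $d$, with $*$, and with $\{-,-\}$; since $A^{ue}$ is generated as an algebra by the $M_a$ and $H_a$, commuting with the actions of all $M_a$ and $H_a$ forces $h$ to be $A^{ue}$-linear, so $F(h) = h$ is a morphism in $\DGM\text{-}A^{ue}$. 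Functoriality is immediate.

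Next I would define $G: \DGM\text{-}A^{ue} \to \DGP\text{-}A$ going the other way. Given a DG $A^{ue}$-module $(V, d_V)$ with structure map $\psi: A^{ue} \to \Hom(V,V)$ (again a degree-$0$ DG-algebra map), compose with $M$ and $H$ to get $f := \psi M: A \to \Hom(V,V)$ and $g := \psi H: A \to \Hom(V,V)$. Because $\psi$ is a DG-algebra map and $M,H$ satisfy relations (i)--(v) of Definition \ref{Def:DGPA}, the pair $(f,g)$ satisfies exactly (P1)--(P4): relation (i) and (v) give that $f$ is a DG-algebra map, relation (ii) gives that $g$ is a DG Lie algebra map of degree $p$, relation (iv) gives (P3), and relation (iii) gives (P4). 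Hence $(\Hom(V,V), f, g)$ is a $\mathscr{P}$-triple, and by Lemma \ref{Lemma:P} this endows $V$ with a DG Poisson $A$-module structure; set $G(V) = (V, f, g, d_V)$. On morphisms, an $A^{ue}$-linear DG map is in particular linear over $M(A)$ and $H(A)$, hence is an $A$-module and Lie $A$-module morphism, so it is a morphism in $\DGP\text{-}A$.

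Finally I would check $FG = \mathrm{id}$ and $GF = \mathrm{id}$ on the nose (not merely up to natural isomorphism). In both round-trips the underlying DG vector space $(V,d_V)$ is untouched, so it suffices to see that the structure maps are recovered. Starting from $V \in \DGP\text{-}A$: $F$ produces the unique $\phi$ with $\phi M = f$, $\phi H = g$; then $G$ recovers $\phi M$ and $\phi H$, i.e.\ $f$ and $g$, which by Lemma \ref{Lemma:P} reconstruct the original $*$ and $\{-,-\}_V$. Starting from $V \in \DGM\text{-}A^{ue}$ with structure map $\psi$: $G$ produces $f = \psi M$, $g = \psi H$; then $F$ invokes the uniqueness clause of Proposition \ref{Prop:Universal} to conclude that the DG-algebra map reconstructed from $(f,g)$ is $\psi$ itself, since $\psi$ is a DG-algebra map with $\psi M = f$ and $\psi H = g$. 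On morphisms both composites are the identity since the underlying map is never changed. The right-module statement is entirely analogous, using right DG Poisson modules, right $A^{ue}$-modules, and the anti-homomorphism conventions; alternatively one can deduce it formally once the opposite-algebra behavior of $A^{ue}$ (treated later in the paper) is in hand, but the direct argument is cleaner here.

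The main obstacle is not any single hard step but the bookkeeping at the interface between "DG-algebra map $A^{ue} \to \Hom(V,V)$" and "DG $A^{ue}$-module structure on $V$": one must verify carefully that the sign-twisted differential $d(\psi) = d_V \psi - (-1)^{|\psi|}\psi d_V$ on $\Hom(V,V)$ is precisely what makes a degree-$0$ algebra map $\psi$ with $d \circ \psi = \psi \circ d$ equivalent to the DG-module compatibility axiom $d_V(x \cdot v) = (d_{A^{ue}}x)\cdot v + (-1)^{|x|} x \cdot d_V(v)$, and similarly that generation of $A^{ue}$ by $M(A) \cup H(A)$ is what upgrades "commutes with each generator's action" to "$A^{ue}$-linear." Both points are routine but are the places where an error would hide, so I would state them explicitly rather than leaving them to the reader.
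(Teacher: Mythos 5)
Your proposal is correct and follows essentially the same route as the paper: both use Lemma \ref{Lemma:P} to translate a DG Poisson module structure on $V$ into a $\mathscr P$-triple $(\Hom(V,V),f,g)$, invoke Proposition \ref{Prop:Universal} to get the DG-algebra map $A^{ue}\to\Hom(V,V)$, and build the inverse by composing a module structure map with the canonical triple $(A^{ue},M,H)$. Your write-up is simply more explicit than the paper's about the round-trip identities and the interface between algebra maps into $\Hom(V,V)$ and DG-module structures.
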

\begin{proof}
Let $M\in \DGP$-$A$. By Lemma \ref{Lemma:P}, it gives us a $\mathscr P$-triple $(\Hom(M,M),f,g)$. Then by Proposition \ref{Prop:Universal}, we get a unique DG-algebra map $\phi: A^{ue}\to \Hom(M,M)$ of degree 0 making certain diagram bi-commute. Hence, we can view $M$ as a DG $A^{ue}$-module via $\phi$. Moreover, it is direct to show that $\phi$ respects homomorphisms between any two DG Poisson $A$-modules. Hence, we have a functor $\Phi:  \DGP$-$A\to \DGM$-$ A^{ue}$ induced by $\phi$. Finally, using the canonical triple $(A^{ue},M,H)$, we see that any $M\in \DGM$-$A^{ue}$ can be viewed as a DG Poisson $A$-module by Lemma \ref{Lemma:P} again, which yields an inverse functor of $\Phi$.
\end{proof}

In the following, we will use $\DGA$ to denote the category of all DG-algebras, and use $\DGPA[p]$ to denote the category of all DG Poison algebras whose Poisson brackets are of degree $p$ along with standard morphisms. It is well-known that how to take opposite algebra and tensor product in $\DGA$ with application of Koszul signs. In $\DGPA[p]$, the opposite DG Poisson algebra of $A$ is denoted by $A^{op}$, where $A^{op}=A$ as DG-algebras and the Poisson bracket of $A^{op}$ is given by 
$$\{a,b\}_{A^{op}}:=(-1)^{(|a|+p)(|b|+p)}\{b,a\}_A=-\{a.b\}_A,\ \mbox{for all}\ a,b\in A^{op}.$$ 
Now, let $B$ be another DG Poisson algebra with Poisson bracket $\{-,-\}_B$ of same degree $p$. The tensor product $A\otimes B$ in $\DGA$ is again a graded commutative DG-algebra, where the product of homogenous elements of degree $d_1$ and $d_2$ is twisted by $(-1)^{d_1d_2}$. Moreover, it is straightforward to check that $A\otimes B$ becomes a DG Poisson algebra with Poisson bracket of degree $p$ by setting 
\begin{align}\label{E:PB}
\{a_1\otimes b_1,a_2\otimes b_2\}_{A\otimes B}:=(-1)^{(|a_2|+p)|b_1|}\{a_1,a_2\}_A\otimes b_1b_2+(-1)^{(|b_1|+p)|a_2|}a_1a_2\otimes \{b_1,b_2\}_B
\end{align}
for any $a_i\in A$ and $b_i\in B, i=1,2$.

\begin{exa}
{\rm Let $A$ be a DG Poisson algebra with Poisson bracket of degree $p$, and $(M,*,\{-,-\}_M,d_M)$ a left DG Poisson module over $A$. Then $(M^{op},*_{op},\{-,-\}_{M^{op}},d_{M^{op}})$ is a right DG Poisson module over the opposite DG Poisson algebra $A^{op}$, where $(M^{op},d_{M^{op}})=(M,d_M)$ as differential graded vector spaces, and 
\[
m*_{op} a:=(-1)^{|a||m|}a*m,\ \{m,a\}_{M^{op}}:=(-1)^{(|a|+p)(|m|+p)}\{a,m\}_M,
\]
for any $a\in A,m\in M$. Moreover, let $B$ be another DG Poisson algebra with Poisson bracket of same degree $p$, and $(N,*,\{-,-\}_N,d_N)$ a left DG Poisson module over $B$. Then, the tensor product $M\otimes N$ can be viewed as a left DG Poisson module over the DG Poisson tensor product $A\otimes B$, where we use the obvious DG $A\otimes B$-module structure on $M\otimes N$, and set up
\[
\{a\otimes b,m\otimes n\}_{M\otimes N}:=(-1)^{(|m|+p)|b|}\{a,m\}_M\otimes (b*n)+(-)^{|m|(|b|+p)}(a*m)\otimes \{b,n\}_N,
\]
for any $a\in A,b\in B,m\in M,n\in N$.
}
\end{exa}

\begin{lemma}\label{4.4}
Let $A,B$ be two DG Poisson algebras with Poisson brackets of degree $p$. Suppose there are two $\mathscr P$-triples $(C,f,g)$ and $(D,j,k)$ with respect to $A,B$ correspondingly. Then, there is a tensor $\mathscr P$-triple
\[
T:=(C\otimes D,f\otimes j,f\otimes k+(-1)^{p|B|}g\otimes j)
\]
with respect to $A\otimes B$. We use $(-1)^{p|B|} g\otimes j$ to mean that $((-1)^{p|B|} g\otimes j)(a\otimes b)=(-1)^{p|b|} g(a)\otimes j(b)$ for any $a\in A,b\in B$.
\end{lemma}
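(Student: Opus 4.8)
The plan is to verify directly that the proposed tensor triple $T = (C \otimes D, f \otimes j, f \otimes k + (-1)^{p|B|} g \otimes j)$ satisfies the four axioms (P1)--(P4) with respect to $A \otimes B$. The underlying DG-algebra structure on $C \otimes D$ (with Koszul signs in the multiplication and differential) is standard, and the Poisson bracket on $A \otimes B$ is the one given by \eqref{E:PB}; so the entire content is a sign-bookkeeping computation. I would introduce the shorthand $\psi := f \otimes k + (-1)^{p|B|} g \otimes j$, noting that $\psi(a \otimes b) = f(a) \otimes k(b) + (-1)^{p|b|} g(a) \otimes j(b)$, which has total degree $p$ since $f, j$ have degree $0$ and $g, k$ have degree $p$.

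First I would check (P1): $f \otimes j$ is a DG-algebra map of degree $0$ because $f$ and $j$ are, and because the tensor product of algebra maps is an algebra map once the Koszul sign convention on both sides matches (this is where $A \otimes B$ being graded-commutative and $f, j$ being degree $0$ makes the signs cancel). Next, (P2): I must show $\psi$ is a DG Lie algebra map of degree $p$ from $(A \otimes B, \{-,-\}_{A \otimes B})$ to $C \otimes D$ with its graded commutator. Expanding $\psi(\{a_1 \otimes b_1, a_2 \otimes b_2\}_{A \otimes B})$ using \eqref{E:PB} and then applying that $f, j$ are algebra maps and $g, k$ are Lie maps, one gets four terms; on the other side, the graded commutator $[\psi(a_1 \otimes b_1), \psi(a_2 \otimes b_2)]$ in $C \otimes D$ expands into a sum of terms involving products $f(a_1)g(a_2) \otimes k(b_1)j(b_2)$ and the like, and the mixed $g \otimes j$ against $f \otimes k$ cross-terms, after applying (P3) and (P4) for the two given triples, must collapse onto the same four terms. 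That the cross-terms reorganize correctly is precisely where axioms (P3) and (P4) for $(C,f,g)$ and $(D,j,k)$ enter, so this step uses the full strength of both hypotheses. Then (P3) for $T$ asks that $(f \otimes j)(\{a_1 \otimes b_1, a_2 \otimes b_2\}) = \psi(a_1 \otimes b_1)(f \otimes j)(a_2 \otimes b_2) - (\pm)(f\otimes j)(a_2\otimes b_2)\psi(a_1 \otimes b_1)$; again one expands both sides via \eqref{E:PB}, (P3) for $C$, (P3) for $D$, and matches. Axiom (P4) is symmetric in flavor and handled the same way, using (P4) for $C$ and $D$ together with the definition \eqref{E:PB}. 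Finally, compatibility with differentials: $f \otimes j$ and $\psi$ each commute with $d_{C \otimes D}$ because $f, g, j, k$ do individually and the tensor differential is built additively with a sign $(-1)^{|c|}$ on the second factor, which is consistent with the degrees recorded above.

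The main obstacle I expect is purely the sign arithmetic in the (P2) verification: the Poisson bracket on $A \otimes B$ already carries two Koszul-type signs, $\psi$ contributes another $(-1)^{p|b|}$, the graded commutator in $C \otimes D$ contributes a sign depending on the total degrees of mixed tensors like $g(a) \otimes j(b)$ (whose degree is $|a| + p + |b|$), and the multiplication in $C \otimes D$ twists by $(-1)^{|c_1||d_1|}$ — getting all of these to cancel so that the cross-terms vanish and the diagonal terms survive with exactly the coefficients in \eqref{E:PB} is delicate. A clean way to organize this, which I would adopt, is to treat the four ``types'' of terms ($f\otimes k$ against $f\otimes k$, $f\otimes k$ against $g\otimes j$, $g\otimes j$ against $f\otimes k$, $g\otimes j$ against $g\otimes j$) separately, show the two ``pure'' types each reproduce one summand of \eqref{E:PB} after invoking (P3)/(P4) on one tensor factor and a Lie-map identity on the other, and show the two ``mixed'' types cancel against each other; once the grouping is fixed, each individual check is routine. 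Since all four axioms come down to the same kind of expansion-and-match, I would write out (P2) in full as the representative case and indicate that (P1), (P3), (P4), and differential-compatibility follow by the same method.
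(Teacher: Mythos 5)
Your overall strategy is exactly the paper's: a direct term-by-term verification of (P1)--(P4) for the triple $T$, expanding everything with Koszul signs and invoking the corresponding axioms of $(C,f,g)$ and $(D,j,k)$; the paper writes out (P1) and (P3) in full and leaves the rest to the reader, while you propose to write out (P2). One step in your plan for (P2) would fail as literally stated, however: the two ``mixed'' cross-terms in the expansion of the graded commutator $[\psi(a_1\otimes b_1),\psi(a_2\otimes b_2)]$ (i.e.\ $f\otimes k$ against $g\otimes j$ and $g\otimes j$ against $f\otimes k$) do \emph{not} cancel each other. Rather, the two pure terms match $f(a_1a_2)\otimes k(\{b_1,b_2\}_B)$ and $g(\{a_1,a_2\}_A)\otimes j(b_1b_2)$ (using (P1) on one factor and (P2) on the other), while the two mixed terms together must match the remaining contributions $f(\{a_1,a_2\}_A)\otimes k(b_1b_2)$ and $g(a_1a_2)\otimes j(\{b_1,b_2\}_B)$ of $\psi$ applied to the bracket \eqref{E:PB} --- and it is in matching these mixed terms, not in cancelling them, that (P3) and (P4) for the two given triples are used. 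Once that regrouping is corrected, the computation goes through and the argument is the same as the paper's.
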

\begin{proof}
It is tedious to check that (P1)-(P4) all hold for the triple $T$ defined above. We only check (P1) and (P3) here, and leave the rest to the readers. (P1): first of all, we show that $f\otimes j: A\otimes B\to C\otimes D$ is a graded algebra map of degree 0. For any $a_i\in A, b_i\in B$, $i=1,2$, we have
\begin{align*}
(f\otimes j)[(a_1\otimes b_1)(a_2\otimes b_2)]&=(-1)^{|a_2||b_1|}(f\otimes j)(a_1a_2\otimes b_1b_2)\\
 &= (-1)^{|a_2||b_1|}f(a_1a_2)\otimes j(b_1b_2)\\
 &= (-1)^{|f(a_2)||j(b_1)|}f(a_1)f(a_2)\otimes j(b_1)j(b_2)\\
 &=(f(a_1)\otimes j(b_1))(f(a_2)\otimes j(b_2))\\
 &=(f\otimes j)(a_1\otimes b_1)(f\otimes j)(a_2\otimes b_2).
\end{align*}
Then, we have to show that $f\otimes j$ commutes the differentials. We have, for any $a\in A,b\in B$,
\begin{align*}
(f\otimes j)d_{A\otimes B}(a\otimes b)&=(f\otimes j)(d_A(a)\otimes b+(-1)^{|a|}a\otimes d_B(b))\\
&=fd_A(a)\otimes j(b)+(-1)^{|a|}f(a)\otimes jd_B(b)\\
&=d_Cf(a)\otimes j(b)+(-1)^{|a|}f(a)\otimes d_Dj(b)\\
&=d_{C\otimes D}(f\otimes j)(a\otimes b).
\end{align*}
(P3): for the left side of the identity, by Equation \eqref{E:PB}, we have
\begin{align*}
&(f\otimes j)(\{a_1\otimes b_1,a_2\otimes b_2\}_{A\otimes B})\\
=&(-1)^{(|a_2|+p)|b_1|}f(\{a_1,a_2\}_A)\otimes j(b_1b_2)+(-1)^{|a_2|(|b_1|+p)}f(a_1a_2)\otimes j(\{b_1,b_2\}_B)\\
=&(-1)^{(|a_2|+p)|b_1|}(g(a_1)f(a_2)-(-1)^{(|a_1|+p)|a_2|}f(a_2)g(a_1))\otimes j(b_1)j(b_2)+\\
&(-1)^{|a_2|(|b_1|+p)}f(a_1)f(a_2)\otimes (k(b_1)j(b_2)-(-1)^{(|b_1|+p)|b_2|}j(b_2)k(b_1))\\
=&(-1)^{|a_2||b_1|+p|b_1|}g(a_1)f(a_2)\otimes j(b_1)j(b_2)-(-1)^{|a_1||a_2|+p|b_1|+p|a_2|+|a_2||b_1|}f(a_2)g(a_1)\otimes j(b_1)j(b_2)\\
&+(-1)^{|a_2||b_1|+p|a_2|}f(a_1)f(a_2)\otimes k(b_1)j(b_2)-(-1)^{|b_1||b_2|+p|b_2|+p|a_2|+|a_2||b_1|}f(a_1)f(a_2)\otimes j(b_2)k(b_1).
\end{align*}
For the right side of the identity, let $\chi=f\otimes k+(-1)^{p|B|}g\otimes j$, we have 
\begin{align*}
&\chi(a_1\otimes b_1)(f\otimes j)(a_2\otimes b_2)-(-1)^{(|a_1|+|b_1|+p)(|a_2|+|b_2|)}(f\otimes j)(a_2\otimes b_2)\chi(a_1\otimes b_1)\\
=&(f(a_1)\otimes k(b_1)+(-1)^{p|b_1|}g(a_1)\otimes j(b_1))(f(a_2)\otimes j(b_2))-(-1)^{(|a_1|+|b_1|+p)(|a_2|+|b_2|)}(f(a_2)\otimes j(b_2))\\
&(f(a_1)\otimes k(b_1)+(-1)^{p|b_1|}g(a_1)\otimes j(b_1))\\
=&(-1)^{|f(a_2)||k(b_1)|}f(a_1)f(a_2)\otimes k(b_1)j(b_2)+(-1)^{p|b_1|+|f(a_2)||j(b_1)|}g(a_1)f(a_2)\otimes j(b_1)j(b_2)\\
&-(-1)^{(|a_1|+|b_1|+p)(|a_2|+|b_2|)+|f(a_1)||j(b_2)|}f(a_2)f(a_1)\otimes j(b_2)k(b_1)\\
&-(-1)^{(|a_1|+|b_1|+p)(|a_2|+|b_2|)+p|b_1|+|g(a_1)||j(b_2)|}f(a_2)g(a_1)\otimes j(b_2)j(b_1).
\end{align*}
Note that $f(a_2)f(a_1)=(-1)^{|f(a_1)||f(a_2)|}f(a_1)f(a_2), j(b_2)j(b_1)=(-1)^{|j(b_1)||j(b_2)|}j(b_1)j(b_2)$, and $f,j$ have degree 0, $g,k$ have degree $p$. Replacing these things in the last equality, we will see that (P3) holds.

\end{proof}

\begin{thm}\label{THM:T}
Let $A,B$ be two DG Poisson algebras with Poisson brackets of the same degree $p$. Then we have
\begin{itemize}
\item[(i)] $(A^{op})^{ue}\cong (A^{ue})^{op}$.
\item[(ii)] $(A\otimes B)^{ue}\cong A^{ue}\otimes B^{ue}$.
\item[(iii)] $(A\otimes A^{op})^{ue}\cong A^{ue}\otimes (A^{ue})^{op}$.
\end{itemize}
Moreover, $ue: \DGPA[p]\to \DGA$ is a tensor functor.
\end{thm}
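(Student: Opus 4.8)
The plan is to establish the three isomorphisms in (i), (ii), (iii) using the universal property from Proposition \ref{Prop:Universal} together with Lemma \ref{4.4}, and then assemble them into the statement that $ue\colon \DGPA[p]\to\DGA$ is a tensor functor. First I would check functoriality: given a morphism $\psi\colon A\to B$ of DG Poisson algebras, the composite $M_B\psi, H_B\psi\colon A\to B^{ue}$ together with $B^{ue}$ yields a $\mathscr P$-triple with respect to $A$ (this is immediate since $\psi$ respects product, bracket, and differential), so Proposition \ref{Prop:Universal} produces a unique DG-algebra map $\psi^{ue}\colon A^{ue}\to B^{ue}$; uniqueness gives $\ID^{ue}=\ID$ and $(\psi'\psi)^{ue}=\psi'^{ue}\psi^{ue}$, so $ue$ is a functor.

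For (i): the triple $(( A^{ue})^{op}, M, H)$ is a $\mathscr P$-triple with respect to $A^{op}$. Indeed (P1) is clear since $M$ is still a DG-algebra map into the opposite algebra (using graded commutativity is not needed — $M$ lands in the commutative part, or more precisely one checks directly), and the key point is that passing to the opposite algebra negates the graded commutator, which exactly matches the sign $\{a,b\}_{A^{op}}=-\{a,b\}_A$; then (P2)--(P4) follow by tracking signs. By universality we get a DG-algebra map $(A^{op})^{ue}\to (A^{ue})^{op}$, and symmetrically (noting $(A^{op})^{op}=A$ and $((A^{ue})^{op})^{op}=A^{ue}$) an inverse, so the two are isomorphic. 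For (ii): Lemma \ref{4.4} applied to the canonical triples $(A^{ue},M,H)$ and $(B^{ue},M,H)$ gives a $\mathscr P$-triple $(A^{ue}\otimes B^{ue},\, M\otimes M,\, M\otimes H+(-1)^{p|B|}H\otimes M)$ with respect to $A\otimes B$, hence a DG-algebra map $\Theta\colon (A\otimes B)^{ue}\to A^{ue}\otimes B^{ue}$. To produce the inverse I would use the two natural DG Poisson algebra inclusions $A\hookrightarrow A\otimes B$ and $B\hookrightarrow A\otimes B$ (sending $a\mapsto a\otimes 1$, $b\mapsto 1\otimes b$), apply the functor $ue$ to get DG-algebra maps $A^{ue}\to (A\otimes B)^{ue}$ and $B^{ue}\to (A\otimes B)^{ue}$ whose images commute (this commutativity must be checked on generators using relations (i)--(v) and Equation \eqref{E:PB}, which forces $\{a\otimes 1,1\otimes b\}_{A\otimes B}=0$), and therefore by the universal property of the tensor product of DG-algebras a map $A^{ue}\otimes B^{ue}\to (A\otimes B)^{ue}$; then one checks $\Theta$ and this map are mutually inverse by comparing them on the generating sets $M_{a\otimes b}, H_{a\otimes b}$ and on $M_a\otimes 1$, $H_a\otimes 1$, etc. Finally (iii) is the specialization of (ii) with $B=A^{op}$ combined with (i): $(A\otimes A^{op})^{ue}\cong A^{ue}\otimes (A^{op})^{ue}\cong A^{ue}\otimes (A^{ue})^{op}$.

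To conclude that $ue$ is a tensor functor, I would note that (ii) supplies the structure isomorphism $A^{ue}\otimes B^{ue}\xrightarrow{\ \sim\ }(A\otimes B)^{ue}$, natural in $A$ and $B$ (naturality follows from uniqueness in Proposition \ref{Prop:Universal}), that $\K^{ue}=\K$ supplies the unit constraint, and that the associativity and unit coherence diagrams commute — again by the uniqueness clause of the universal property, since both composites around each diagram are DG-algebra maps induced by the same $\mathscr P$-triple. The main obstacle I anticipate is not any single conceptual point but the sign bookkeeping: verifying that the candidate triples genuinely satisfy (P1)--(P4) — especially the Koszul signs in (P3) and (P4) for the tensor triple in Lemma \ref{4.4}, and the interaction of the sign $(-1)^{p|B|}$ with the opposite-algebra sign in (iii) — and checking that the two constructed maps in (ii) compose to the identity on a generating set without sign discrepancies. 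A secondary subtlety is confirming that the images of $A^{ue}$ and $B^{ue}$ inside $(A\otimes B)^{ue}$ commute, which relies on the vanishing of the mixed bracket and on relation (iii) relating $H_{ab}$ to the $M$'s; this is where I would be most careful.
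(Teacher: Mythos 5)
Your proposal is correct and follows essentially the same route as the paper: both hinge on Lemma \ref{4.4} to equip $A^{ue}\otimes B^{ue}$ with a $\mathscr P$-triple structure over $A\otimes B$, and both reduce the remaining work to the graded commutation of the images of $A^{ue}$ and $B^{ue}$. The only cosmetic difference is that you build an explicit two-sided inverse via the inclusions $A,B\hookrightarrow A\otimes B$, whereas the paper verifies directly that $A^{ue}\otimes B^{ue}$ satisfies the universal property of Proposition \ref{Prop:Universal} and invokes uniqueness of universal objects; your treatment of (i) and of the tensor-functor claim is also more explicit than the paper's, which leaves those points to the reader.
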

\begin{proof}
We only prove (ii) here. We can get (i) from the same fashion, and (iii) is a corollary of (i) and (ii). First of all, note that we have two canonical $\mathscr P$-triples $(A^{ue},M_A,H_A)$ and $(B^{ue},M_B,H_B)$. By Lemma \ref{4.4}, there is a tensor $\mathscr P$-triple:
\[
T:=(A^{ue}\otimes B^{ue},M_A\otimes M_B,M_A\otimes H_B+(-1)^{p|B|}H_A\otimes M_B)
\]
with respect to $A\otimes B$. It suffices to show that $T$ has the universal property stated in Proposition \ref{Prop:Universal}. Then the uniqueness argument will imply (ii). Now, suppose $(D,f,g)$ is any $\mathscr P$-triple. We will proceed by completing the following diagram:
\[
\xymatrix{
& A\ar[dl]_-{i_A}\ar[dr]^-{f_A,g_A}\ar[rr]^-{M_A,H_A} & & A^e\ar@{-->}[dl]^-{\exists !\phi_{A}}\ar[dr]^-{i_{A^{ue}}}&\\
A\otimes B\ar[rr]^-{f,g}&  &D & & A^{ue}\otimes B^{ue}\ar@{-->}[ll]_{\exists !\phi}\\
& B\ar[ur]_-{f_B,g_B}\ar[ul]^-{i_B}\ar[rr]_-{M_B,H_B}& & B^{ue}\ar@{-->}[ul]_-{\exists !\phi_{B}}\ar[ur]_-{i_{B^{ue}}}&\\
}
\]
By the natural inclusion $i_A: A\to A\otimes B$, it is clear that the triple $(D,f_A,g_A)$ has property $\mathscr P$ with respect to $A$. Then there exists a unique DG-algebra map $\phi_A: A^{ue}\to D$ of degree 0 such that $f_A=\phi_AM_A$ and $g_A=\phi_AH_A$. Similarly, there exists a unique DG-algebra map $\phi_B: B^{ue}\to D$ of degree 0 such that $f_B=\phi_BM_B$ and $g_B=\phi_BH_B$. Once we establish the following lemma, by the universal property of the tensor product, we will have a unique DG-algebra map $\phi: A^{ue}\otimes B^{ue}\to D$ of degree 0 making the above diagram bi-commutative. Finally, the uniqueness of $\phi$ is clear by tracking the diagram.  
\end{proof}

\begin{lemma}
In the above diagram, for any $x\in A^{ue},y\in B^{ue}$, we have $\phi_A(x)\phi_B(y)=(-1)^{|x||y|}\phi_B(y)\phi_A(x)$.
\end{lemma}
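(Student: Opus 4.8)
The plan is to reduce the commutation relation to the generators of $A^{ue}$ and $B^{ue}$, and then check it on those. Since $A^{ue}$ is generated as an algebra by $\{M_a, H_a : a\in A\}$ and $\phi_A$ sends these to $f_A(a) = f(a\otimes 1)$ and $g_A(a) = g(a\otimes 1)$ (and symmetrically $\phi_B$ sends $M_b, H_b$ to $f(1\otimes b)$, $g(1\otimes b)$), it suffices to prove the four identities obtained by taking $x \in \{M_a, H_a\}$ and $y \in \{M_b, H_b\}$; the general case then follows by an induction on word length, using that if the relation holds for $x$ and for $x'$ it holds for $xx'$ (a short computation with Koszul signs, since $|xx'|=|x|+|x'|$). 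So the heart of the matter is the four base cases
\[
f(a\otimes 1)\,f(1\otimes b) = (-1)^{|a||b|} f(1\otimes b)\,f(a\otimes 1),
\]
and the analogous ones with $f(a\otimes 1)$ replaced by $g(a\otimes 1)$ (degree $|a|+p$) and/or $f(1\otimes b)$ replaced by $g(1\otimes b)$ (degree $|b|+p$).

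First I would observe that $a\otimes 1$ and $1\otimes b$ \emph{commute in the graded-commutative sense in $A\otimes B$} — indeed $(a\otimes 1)(1\otimes b) = a\otimes b = (-1)^{|a||b|}(1\otimes b)(a\otimes 1)$ — and, crucially, that their Poisson bracket vanishes: by Equation \eqref{E:PB}, $\{a\otimes 1, 1\otimes b\}_{A\otimes B} = (-1)^{(|1|+p)|0|}\{a,1\}_A\otimes 0 + (-1)^{\cdots} a\cdot 0 \otimes \{1,b\}_B = 0$ (both terms carry a factor $1_B$ or $1_A$ being bracketed, hence vanish, using $\{1,-\}=0$). Now apply the defining properties of the $\mathscr P$-triple $(D,f,g)$ to the pair $x=a\otimes 1$, $y=1\otimes b$: property (P1) gives $f$ is an algebra map, so $f(a\otimes 1)f(1\otimes b) = f(a\otimes b) = (-1)^{|a||b|}f(1\otimes b)f(a\otimes 1)$, settling the $(M_a,M_b)$ case. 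Property (P3) applied to this pair reads $f(\{a\otimes 1, 1\otimes b\}) = g(a\otimes 1)f(1\otimes b) - (-1)^{(|a|+p)|b|}f(1\otimes b)g(a\otimes 1)$; since the left side is $0$, this is exactly the $(H_a, M_b)$ case. Symmetrically, (P3) applied to the pair $(1\otimes b, a\otimes 1)$ — or (P4) applied suitably — handles $(M_a, H_b)$. Finally the $(H_a,H_b)$ case: $g$ is a DG Lie map (P2), so $g(a\otimes 1)g(1\otimes b) - (-1)^{(|a|+p)(|b|+p)}g(1\otimes b)g(a\otimes 1) = g(\{a\otimes 1, 1\otimes b\}) = g(0) = 0$, giving the required relation with the sign $(-1)^{(|a|+p)(|b|+p)}$, i.e.\ $(-1)^{|x||y|}$ for $|x|=|a|+p$, $|y|=|b|+p$.

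With the four base cases in hand, the induction is routine bookkeeping: writing an arbitrary $x\in A^{ue}$ as a sum of products of generators and likewise for $y$, bilinearity reduces to $x$, $y$ each a single monomial in the generators, and then one pushes $y$ past $x$ one generator at a time, each move contributing the Koszul sign $(-1)^{(\deg)(\deg)}$, the product of which telescopes to $(-1)^{|x||y|}$. I expect the main (and only real) obstacle to be \emph{keeping the sign conventions consistent} — in particular making sure that the degrees used are the degrees in $A^{ue}$ (so $|H_a| = |a|+p$, not $|a|$) and that the $\mathscr P$-triple axioms (P3), (P4) are invoked with their arguments in the correct order, since swapping the two slots changes which commutator sign appears. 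None of this is conceptually hard; the vanishing of $\{a\otimes 1, 1\otimes b\}_{A\otimes B}$ is what makes everything collapse to the bare graded-commutation relation.
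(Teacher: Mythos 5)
Your proposal is correct and follows essentially the same route as the paper: reduce to the generators $M_a,H_a$ and $M_b,H_b$, verify the graded commutation there, and extend multiplicatively. The paper only writes out the $(M_a,M_b)$ case and leaves the rest to the reader, whereas you correctly identify the key fact --- that $\{a\otimes 1,\,1\otimes b\}_{A\otimes B}=0$ --- which, combined with (P2) and (P3), settles the remaining three cases.
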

\begin{proof}
By abuse of language, we use the same notations $M,H$ to denote the two linear maps from $A$ and $B$ to $A^{ue}$ and $B^{ue}$ correspondingly. Note that it suffices to consider $x=M_a,H_a$ and $y=M_b,H_b$ for any $a\in A,b\in B$. We only check for $x=M_a$ and $y=M_b$ here, and leave the rest to the readers. We have
\begin{align*}
\phi_A(M_a)\phi_B(M_b)=f_A(a)f_B(b)=f(a\otimes 1)f(1\otimes b)=f(a\otimes b)\\=(-1)^{|a||b|}f(1\otimes b)f(a\otimes 1)=(-1)^{|a||b|}\phi_B(M_b)\phi_A(M_a).
\end{align*}
\end{proof}

\section{Further examples of DG Poisson algebras}\label{Section:Example}
In this section, examples of DG Poisson algebras are provided arising from Lie theory, differential geometry, homological algebra and deformation theory. See more examples in \cite{BM,CFL}.
\subsection{Graded symmetric algebras of DG Lie algebras}
Let $(L,d_L,[-,-]_L)$ be a DG Lie algebra. As in Example \ref{EX:DGV}, we use $S(L)$ to denote the graded symmetric algebra over $L$, i.e.,
\[
S(L):=T(L)/(a\otimes b-(-1)^{|a||b|}b\otimes a),
\]
for any $a,b\in L$. The differential $d_L$ of $L$ can be extended to the graded symmetric algebra $S(L)$ such that $S(L)$ becomes a graded commutative DG-algebra. Here, we consider the total grading on $S(L)$ coming from the grading of $L$. Moreover, the Lie bracket on $L$ also can be extended to a Poisson bracket on $S(L)$ by Jacobi identity such that 
\[
\{a,b\}_{S(L)}:=[a,b]_L,
\]
for any $a,b\in L$. Hence, the graded symmetric algebra $S(L)$ over any DG Lie algebra $L$ has a natural DG Poisson algebra structure. 

Next, note that we can view $L$ as a left DG Lie $L$-module via the Lie bracket $[-,-]_L$. The graded semi-product $L\rtimes L$ is constructed in the following way: (1) $L\rtimes L=L\bigoplus L$ as graded vector spaces, (2) Lie bracket $[-,-]_{L\rtimes L}$ is 0 on the Lie subalgebra $L+0$ and equals $[-,-]_L$ from the pair of components $0+L$ and $L+0$ to $L+0$, (3) the differential on $L\rtimes L$ is given by $d_L$ component-wisely. We denote by $\mathcal U(L\rtimes L)$ the graded universal enveloping algebra of $L\rtimes L$, which is a DG-algebra. Therefore, we have the following isomorphism:
\[
S(L)^{ue}\cong \mathcal U(L\rtimes L).
\]
The isomorphism $\Phi:S(L)^{ue}\to \mathcal U(L\rtimes L)$ can be constructed explicitly by Definition \ref{Def:DGPA} such that $\Phi(M_L)=L+0$ and $\Phi(H_L)=0+L$ in $L\rtimes L$. Moreover, $S(L)$ has a differential graded Hopf structure which is compatible with the Poisson bracket. See \cite[Proposition 6.3]{LWZ1} for the trivial case when $L$ is only a Lie algebra without DG structure.

\subsection{Gerstenhaber algebras}
Consider DG Poisson algebras with trivial differential. In general, most of such examples appear to be in the context of Gerstenhaber algebra, which by definition is a graded commutative algebra $G=\bigoplus_{i\in \mathbb Z} G^i$ together with a Lie bracket $[-,-]:G\otimes G\to G$ of degree $-1$ satisfying: (1) (antisymmetry) $[a,b]=-(-1)^{(|a|-1)(|b|-1)}[b,a]$, (2) (Poisson identity) $[a,bc]=[a,b]c+(-1)^{(|a|-1)|b|}b[a,c]$, (3) (Jacobi identity) $[a,[b,c]]=[[a,b],c]+(-1)^{(|a|-1)(|b|-1)}[b,[a,c]]$ for any $a,b,c\in G$. 

For instance, in homological algebra, Hochschild cohomology of any associated algebra together with the Gerstenhaber Lie bracket \cite{Ger}; in differential geometry, the alternating multivector fields on a smooth manifold together with the Schouten-Nijenhuis bracket extending the Lie bracket of vector fields \cite{Nij}, are all Gertenhaber algebras. Moreover, given an arbitrary Lie-Rinehart algebra $(R,L)$ \cite{Ri}, consider the graded exterior $R$-algebra $\Lambda_R^*L$ over $L$. The Lie bracket $[-.-]$ of $L$ induces a Gerstenhaber algebra structure on $\Lambda_R^*L$ explicitly given by, for any $u=\alpha_1\wedge\cdots\wedge \alpha_l\in \Lambda_R^lL$ and $v=\alpha_{l+1}\wedge\cdots\wedge \alpha_n\in \Lambda_R^{n-l}L$, where $\alpha_1,\cdots,\alpha_n\in L$, then 
\[
[u,v]=(-1)^{|u|}\sum\limits_{j\le l<k}(-1)^{(j+k)}[\alpha_j,\alpha_k]\wedge \alpha_1\wedge\cdots\widehat{\alpha_j}\cdots\widehat{\alpha_k}\cdots \wedge \alpha_n. 
\]

Suppose $A$ is a DG Poisson algebra with Poisson bracket of degree $-1$. Then, it is clear that the cohomology ring $\HG A$ is a Gerstenhaber algebra whose Lie bracket is induced by the Poisson bracket on the cohomology. In the other hand, let $G=\bigoplus_{i\in \mathbb Z} G^i$ be a Gerstenhaber algebra with a Lie bracket $[-,-]$ of degree $-1$. Suppose there is some element $\alpha\in G^2$ satisfying $[\alpha,\alpha]=0$. Hence we can define a map $d: G\to G$ of degree $1$ such that $d: =[\alpha,-]$. If $\mbox{char} \field\neq 2$, one sees that $d^2=0$, hence we have a differential $d$ on $G$. Moreover, it is straightforward to check that, for any $a,b\in G$, we have (1) $d(ab)=d(a)b+(-1)^{|a|}ad(b)$, and (2) $d[a,b]=[d(a),b]+(-1)^{|a|-1}[a,d(b)]$. Therefore, $G$ together with $d=[\alpha,-]$ is a DG Poisson algebra.
  
\subsection{Deformations of graded commutative DG-algebras}
Let $A=\bigoplus_{i\ge \mathbb Z} A^i$ be a graded commutative DG-algebra  together with differential $d$ of degree $1$. Consider a graded deformation of $A$ over the ring $\field[[\hbar]]$, which contains a bimodule map
\[
\star: A[[\hbar]]\otimes A[[\hbar]]\to A[[\hbar]]
\]
making $A[[\hbar]]$ into a DG-algebra where $\deg \hbar=0$. We can write the product of two elements $a,b\in A$ as
\[
a\star b=ab+B_1(a,b)\hbar+B_2(a,b)\hbar^2+\cdots+B_i(a,b)\hbar^i+\cdots,
\]
for bilinear maps $B_i:A\otimes A\to A$ of degree $0$. Suppose $A[[\hbar]]$ is not graded commutative. Then there is some smallest integer $m\ge 1$ such that $B_m(a,b)\neq (-1)^{|a||b|}B_m(b,a)$ for some $a,b\in A$. As a consequence, we can define a Poisson bracket associated to that deformation: for $a,b\in A$, we have 
\[
\{a,b\}:=\{\hbar^{-m}(a\star b-(-1)^{|a||b|}b\star a)\}_{\hbar=0}=B_{m}(a,b)-(-1)^{|a||b|}B_m(b,a).
\]
It is routine to check that this degree 0 bilinear map $\{-,-\}: A\otimes A\to A$ makes $A$ into a DG Poisson algebra.

\section{Closing discussions}\label{Section:Dis}
We close this paper by proposing some questions to be considered in future projects:
\begin{itemize}
\item[(1)] In Xu's paper \cite{Xup}, a noncommutative Poisson structure on an associative algebra $A$ was defined to be any cohomology class $\pi\in \text{H}^2(A,A)$ in the second Hochschild cohomology group of $A$ satisfying $[\pi,\pi]=0$, where $[-,-]$ is the Gerstenhaber bracket on $\text{H}^\bullet(A,A)$. Can we define noncommutative DG Poisson algebras using the same idea?
\item[(2)] Let $A$ be a DG Poisson algebra. One sees easily that there exists a natural graded Poisson algebra map $f_A:\text{H}(A)^{ue}\to \text{H}(A^{ue})$, where $\text{H}$ means taking the cohomology ring. We ask when is $f_A$ an isomorphism? Moreover, we ask if $f: A\to B$ is a quasi-isomorphism of DG Poisson algebras, does $f$ always induce a quasi-isomorphism between $A^{ue}$ and $B^{ue}$?
\item[(3)] The classical Beilinson-Ginzburg-Soergel Koszul duality states a derived equivalence between a Koszul algebra and its Koszul dual. Later, it is generalized to the DG setting by Lu-Palmieri-Wu-Zhang in \cite{LPWZ:Koszul} using $A_\infty$-algebras . Can we establish Koszul duality for certain DG Poisson algebras in terms of the derived equivalence of their DG Poisson modules?
\end{itemize}

\end{document}